\newtheorem{lemma}{Lemma}[section]
\newtheorem{theorem}{Theorem}[section]
\newtheorem{definition}{Definition}[section]
\newtheorem{proposition}{Proposition}[section]
\newtheorem{corollary}{Corollary}
\newtheorem{remark}{Remark}
\newtheorem{maintheorem}{Theorem}
\newcommand{\g}{\gamma}
\def \d{\delta}
\def \e{\epsilon}
\def \g{\gamma}
\begin{document}

\title{Pressures  for Asymptotically  Sub-additive Potentials Under a Mistake Function \footnotetext {* Corresponding author}
 \footnotetext {2000 Mathematics Subject Classification: 37D35, 37A35}}
\author{  Wen-Chiao Cheng$^\dag$ and Yun Zhao$^{\ddag ,*}$ and Yongluo Cao$^{\ddag}$\\
\small \it $\dag$ Department of Applied Mathematics\\
\small \it Chinese Culture University\\
\small \it Yangmingshan, Taipei, Taiwan, 11114\\
\small \it e-mail: zwq2@faculty.pccu.edu.tw\\
\small \it $\ddag$ Department of Mathematics\\
\small \it Suzhou University\\
\small \it Suzhou 215006, Jiangsu, P.R.China\\
\small \it e-mail: zhaoyun@suda.edu.cn\ \ ylcao@suda.edu.cn }
\date{\today}
 \maketitle

 \begin{center}
\begin{minipage}{120mm}
{\small {\bf Abstract.} This paper defines the pressure for
asymptotically  sub-additive potentials under a mistake function,
including the measure-theoretical  and the topological versions.
Using the  advanced techniques of ergodic theory and topological
dynamics, we reveals a variational principle for the new defined
topological pressure without any additional conditions on the
potentials and the compact metric space. }
\end{minipage}
\end{center}

\vskip0.5cm

{\small{\bf Key words and phrases} \ Variational principle;
Topological pressure; Asymptotically sub-additive }\vskip0.5cm

 \section{Introduction}
  Throughout this paper,  $(X,T)$ denotes a topological
 dynamical systems(TDS for short) in the sense that $T:X\rightarrow X$ is a continuous transformation
 on the compact metric space $X$ with metric $d$.  The term $C(X)$ denotes the space of continuous functions from $X$ to
$\mathbb{R}$. Invariant Borel probability measures are are
associated with $(X,T)$. The terms $\mathcal{M}(X,T)$ and
$\mathcal{E}(X,T)$  represent  the space of $T-$invariant Borel
probability measures and the set of $T-$invariant ergodic Borel
probability measures, respectively.

 In classical ergodic theory, measure-theoretic entropy and topological
 entropy are important determinants  of complexity in dynamical
 systems. The important relationship between these two quantities
 is the well-known variational principle. Topological pressure is
 an
 important
 generalization of topological entropy. Ruelle first introduced the concept of
 topological pressure for additive potentials for expansive
 dynamical systems in \cite{rue}, in which he formulated a
 variational principle for topological pressure. Later, Walters \cite{wa1} generalized
these results to continuous maps on compact metric spaces. For an
arbitrary set, we emphasize that it need not be invariant or
compact, as it generalizes the notion of topological pressure
proposed by Pesin and Pitskel' in \cite{pp}, and these notions of
lower and upper capacity topological pressures introduced by Pesin
in \cite{pes}. The theories of topological pressure, variational
principle and equilibrium states play a fundamental role in
statistical mechanics, ergodic theory and dynamical systems, see
\cite{bo,rue2,k10}.

Since Bowen \cite{bowen}, topological pressure has become a basic
tool for studying dimension theory in conformal dynamical systems
\cite{pes2}.  To study dimension theory in non-conformal cases,
experts in dimension theory and dynamical systems introduced
 thermodynamic formalism  for non-additive potentials
\cite{ba,ba2,cfh,fal,fh,mu,zhang}.  Cao, Feng and Huang introduced
the sub-additive topological pressure via separated sets in
\cite{cfh} on general compact metric
 spaces, and obtained the variational principle for sub-additive potentials without any additional
 assumptions on the sub-additive potentials or the TDS
$(X,T)$.

This paper defines the pressure for asymptotically sub-additive
potentials under a mistake function, including the
measure-theoretical and the topological versions.  This paper also
obtains a variational principle for this newly defined topological
pressure. As a physical process evolves, it is natural for the
evolving process to change or produce some errors in the
evaluation of orbits.  However, a self-adaptable system should
decrease errors over time.  This is the motivation for this study
to investigate  the dynamical systems under a mistake function.
The following paragraphs provide some notations and definitions.

 For $x,y\in X$ and $n\in \mathbb{N}$,
$d_n(x,y):=\max\{d(T^i(x),T^i(y)):i=0,1,...,n-1\}$ gives a new
metric on $X$. The term  $B_n(x,\epsilon):=\{y\in
X:d_n(x,y)<\epsilon \}$ denotes a ball centered at $x$ with radius
$\epsilon$ under the metric $d_n$. Let $Z\subseteq X, n\in
\mathbb{N}$ and $\epsilon>0$. A set $F\subseteq Z$ is an
$(n,\epsilon)-$spanning set for $Z$ if for every $z\in Z$, there
exists $x\in F$ with $d_n(x,z)\leq \epsilon$. A set $E\subseteq Z$
is an $(n,\epsilon)-$separated set for $Z$ if for every $x,y\in E$
implies $d_n(x,y)>\epsilon$.  Given $\delta>0$ and $\mu\in
\mathcal{M}(X,T)$, a set $S$ is a $(n,\e,\delta)-$spanning set if
$\mu(\bigcup_{x\in S}B_n(x,\e))>1-\delta$.

 A sequence $\mathcal{F}=\{
f_n\}_{n=1}^{\infty}\subseteq C(X)$ is  an asymptotically
sub-additive potentials(ASP for short) on $X$, if for each $k
>0$, there exists a sub-additive potentials
$\Phi_k=\{\varphi_{n}^{k}\}_{n\geq 1}$, i.e.
$\varphi_{n+m}^{k}(x)\leq
\varphi_{n}^{k}(x)+\varphi_{m}^{k}(T^nx), \forall x\in X, n,m\in
\mathbb{N}$, such that
\[
\limsup_{n\rightarrow\infty}\frac{1}{n}||f_n-\varphi_{n}^{k}||\leq
\frac 1 k
\]
where $||f_n-\varphi_{n}^{k}||:=\max_{x\in
X}|f_n(x)-\varphi_{n}^{k}(x)|$. This kind of potential appears
naturally in the study of the dimension theory in dynamical
systems, see \cite{fh,zzc} for related examples.  Along with  Cao,
Feng and Huang's paper [6], Feng and Huang defined asymptotically
sub-additive topological pressure in \cite{fh} as follows:
\begin{eqnarray*}
P(T,\mathcal{F},n,\e)&=&\sup\{\sum_{y\in E}f_n(y): E \mbox{ is an
}(n,\epsilon)-\mbox{separated subset of } X\} \\
P(T,\mathcal{F})&=&\lim_{\epsilon\rightarrow
0}\limsup_{n\rightarrow \infty}\frac{1}{n}\log
P(T,\mathcal{F},n,\e)
   \end{eqnarray*}
the term $P(T,\mathcal{F})$ is the asymptotically sub-additive
topological pressure of $T$ with respect to(w.r.t.) $\mathcal{F}$.

Let $\mathcal{F}=\{ f_n\}_{n=1}^{\infty}$ be an ASP.   For a
$T$-invariant Borel probability measure $\mu$, let $h_{\mu}(T)$
denote the measure-theoretic entropy, and denote
$$\mathcal{F}_*(\mu)=\lim_{n\rightarrow \infty}\frac{1}{n}\int  f_n\,d\mu.$$
When $\mu\in \mathcal{E}(X,T)$,  the above limit exists
$\mu-$almost everywhere without integrating against $\mu$. See the
appendix in \cite{fh} for a proof of the above results. However,
it is easy to show that
$\mathcal{F}_*(\mu)=\lim\limits_{k\rightarrow\infty}\lim\limits_{n\rightarrow\infty}\frac
1 n\int \varphi_{n}^{k} \mathrm{d}\mu$.

With a minor modification of the proof in \cite{cfh}, Feng and
Huang obtained  the relationships among $P(T,\mathcal{F})$,
$h_{\mu}(T)$ and $\mathcal{F}_*(\mu)$ in \cite{fh}.

 \begin{theorem} \label{dl11} \rm
 \it Let $(X,T)$ be a TDS, and $\mathcal{F}=\{ f_n\}_{n\geq 1}$ an ASP. Then
$$
\begin{array}{l}
P(T,\mathcal{F})= \left \{
\begin{array}{l}
~~~~~~-\infty, ~~~~~~~~~~~~~~~~~~~~~\mbox{ if }\mathcal{F}_*(\mu)=-\infty \mbox{ for all } \mu\in \mathcal{M}(X,T),\\
\sup \{h_{\mu}(T)+\mathcal{F}_*(\mu):\mu\in \mathcal{M}(X,T),
\mathcal{F}_*(\mu)\neq -\infty\}, \mbox{ otherwise}.
\end{array}
\right.
\end{array}
$$
\end{theorem}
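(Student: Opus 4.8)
The plan is to reduce the asymptotically sub-additive setting to the genuinely sub-additive one, for which the variational principle of Cao, Feng and Huang in \cite{cfh} is already available, and then to let the approximating sub-additive families converge. Concretely, for each $k$ I fix the sub-additive potential $\Phi_k=\{\varphi_n^k\}_{n\geq 1}$ guaranteed by the definition of an ASP, so that $\limsup_{n\to\infty}\frac1n\|f_n-\varphi_n^k\|\le \frac1k$. Writing $(\Phi_k)_*(\mu)=\lim_{n\to\infty}\frac1n\int\varphi_n^k\,d\mu$, the limit exists because $n\mapsto\int\varphi_n^k\,d\mu$ is sub-additive (using $T$-invariance of $\mu$, one has $\int\varphi_{n+m}^k\,d\mu\le\int\varphi_n^k\,d\mu+\int\varphi_m^k\,d\mu$) together with Fekete's lemma. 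The sub-additive variational principle then gives, for each $k$, the identity $P(T,\Phi_k)=\sup\{h_{\mu}(T)+(\Phi_k)_*(\mu):\mu\in\mathcal{M}(X,T)\}$ with the usual convention for the $-\infty$ case.

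Next I would transfer the uniform error $\|f_n-\varphi_n^k\|$ to both sides of the formula. On the topological side, for a fixed $(n,\epsilon)$-separated set $E$ and each $y\in E$, the pointwise bound $|f_n(y)-\varphi_n^k(y)|\le\|f_n-\varphi_n^k\|$ gives the multiplicative two-sided estimate $e^{-\|f_n-\varphi_n^k\|}\le e^{f_n(y)}/e^{\varphi_n^k(y)}\le e^{\|f_n-\varphi_n^k\|}$. Summing over $E$, taking $\frac1n\log$, then $\limsup_{n\to\infty}$ and finally $\epsilon\to0$, the defining limits collapse to $|P(T,\mathcal{F})-P(T,\Phi_k)|\le\frac1k$. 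On the measure side, integrating the same pointwise inequality against $\mu$ and dividing by $n$ yields $\big|\frac1n\int f_n\,d\mu-\frac1n\int\varphi_n^k\,d\mu\big|\le\frac1n\|f_n-\varphi_n^k\|$, so in the limit $|\mathcal{F}_*(\mu)-(\Phi_k)_*(\mu)|\le\frac1k$; in particular $\mathcal{F}_*(\mu)=-\infty$ if and only if $(\Phi_k)_*(\mu)=-\infty$, so the suprema on the two sides range over the same set of measures.

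I then combine the two estimates. Using the elementary fact that $|\sup_\mu a(\mu)-\sup_\mu b(\mu)|\le\sup_\mu|a(\mu)-b(\mu)|$, the bound $|\mathcal{F}_*(\mu)-(\Phi_k)_*(\mu)|\le\frac1k$ shows that $\sup\{h_{\mu}(T)+(\Phi_k)_*(\mu)\}$ and $\sup\{h_{\mu}(T)+\mathcal{F}_*(\mu)\}$ differ by at most $\frac1k$. Feeding this together with $|P(T,\mathcal{F})-P(T,\Phi_k)|\le\frac1k$ into the sub-additive variational principle for each $k$ and letting $k\to\infty$ yields $P(T,\mathcal{F})=\sup\{h_{\mu}(T)+\mathcal{F}_*(\mu):\mu\in\mathcal{M}(X,T),\,\mathcal{F}_*(\mu)\ne-\infty\}$. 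The dichotomy follows from the same estimates: if $\mathcal{F}_*(\mu)=-\infty$ for every $\mu$, then $(\Phi_k)_*(\mu)=-\infty$ for every $\mu$ and every $k$, hence $P(T,\Phi_k)=-\infty$ and therefore $P(T,\mathcal{F})=-\infty$.

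The only genuinely delicate point, and where I would spend the most care, is ensuring that the uniform error $\|f_n-\varphi_n^k\|$ controls the pressure \emph{multiplicatively} through the exponential weights rather than additively, so that the (possibly exponentially large) cardinality of the separated set never enters the estimate; the compensating $\frac1n\log$ and $\limsup$ are exactly what make this work. A secondary check is that the $-\infty$ values are inherited consistently between $\mathcal{F}_*$ and $(\Phi_k)_*$, which is immediate since they differ by at most $\frac1k$. Everything else is precisely the ``minor modification'' of \cite{cfh} referred to in the text, and the fact that $\mathcal{F}_*(\mu)$ is a bona fide limit is already recorded in the double-limit characterization stated before the theorem.
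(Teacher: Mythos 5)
Your proposal is correct in its essentials, but it is worth noting how it sits relative to the paper: the paper does not prove Theorem \ref{dl11} at all — it quotes it from Feng and Huang \cite{fh}, who are said to obtain it by a ``minor modification of the proof in \cite{cfh}'', i.e.\ by re-running the sub-additive argument directly for ASP. You instead use the Cao--Feng--Huang variational principle as a black box and transfer it to $\mathcal{F}$ through the uniform approximation estimates, which is a genuinely different and arguably more economical route: it buys modularity (no need to repeat the long sub-additive proof) at the cost of having to check carefully that the error $\frac1n\|f_n-\varphi_n^k\|$ passes through both sides of the formula. Two small points deserve tightening. First, the ASP definition only gives $\limsup_{n\to\infty}\frac1n\|f_n-\varphi_n^k\|\le\frac1k$, not $\|f_n-\varphi_n^k\|\le n/k$ for all $n$; so the multiplicative estimate should be run with an auxiliary $\eta>0$ (for $n$ large, $\|f_n-\varphi_n^k\|\le n(\frac1k+\eta)$), the $\limsup$ in the pressure absorbing the factor $e^{n(1/k+\eta)}$, and $\eta\to0$ taken at the end; your phrase ``the defining limits collapse'' glosses exactly this step, though the subadditivity of $\limsup$ makes it work. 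Second, writing $|P(T,\mathcal{F})-P(T,\Phi_k)|\le\frac1k$ and $|\sup a-\sup b|\le\sup|a-b|$ needs the extended-real convention spelled out when values are $-\infty$ (or $h_\mu(T)=+\infty$); the clean formulation is the pair of one-sided inequalities $P(T,\mathcal{F})\le P(T,\Phi_k)+\frac1k$ and $P(T,\Phi_k)\le P(T,\mathcal{F})+\frac1k$, which hold in $[-\infty,+\infty)$ and also settle the dichotomy exactly as you argue. With those two repairs, the reduction is a complete and valid proof.
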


\begin{remark}\label{zhu1}
For each $\mu\in \mathcal{M}(X,T)$, let
$\mu=\int_{\mathcal{E}(X,T)} m\mathrm{\mathrm{d}}\tau(m)$ be its
ergodic decomposition. Thus, $h_{\mu}(T)=\int_{\mathcal{E}(X,T)}
h_m(T)\mathrm{\mathrm{d}}\tau(m)$ and
$\mathcal{F}_*(\mu)=\int_{\mathcal{E}(X,T)}\mathcal{F}_*(m)
\mathrm{\mathrm{d}}\tau(m)$, see \cite{k10} and \cite{fh} for
details. It is then possible to prove that
\begin{eqnarray*}
&&\sup \{h_{\mu}(T)+\mathcal{F}_*(\mu):\mu\in \mathcal{M}(X,T),
\mathcal{F}_*(\mu)\neq -\infty\}\\
&&=\sup \{h_{\mu}(T)+\mathcal{F}_*(\mu):\mu\in \mathcal{E}(X,T),
\mathcal{F}_*(\mu)\neq -\infty\}.
\end{eqnarray*}
Thus, we can replace $\mathcal{M}(X,T)$ with $\mathcal{E}(X,T)$ in
the supremum of theorem \ref{dl11}.
\end{remark}

The thermodynamic formalism  for a single function and  a sequence
of functions arose from various considerations in physics and
mathematics. This study extends thermodynamic formalism  to
asymptotically sub-additive potentials under a mistake function
without any condition on the potentials and the dynamics.

The remainder of this paper is organized as follows. Section 2
defines the pressure for ASP under a mistake function, including
the measure-theoretical  and the topological versions. And we
state our main result and  give some preliminary results. Section
3 provides the proof of the results. The  analysis in this study
relies on the techniques of ergodic theory and topological
dynamics.

 \section{Preliminaries}
This section first defines pressure for ASP under a mistake
function, and then presents the main results. The following
section presents  the proof.

First, recall the definitions of the  mistake function and mistake
dynamical  balls presented by Thompson \cite{th}.
\begin{definition}
Given $\e_0>0$ the function $g:\mathbb{N}\times
(0,\e_0]\rightarrow \mathbb{N}$ is called a mistake function if
for all $\e\in (0,\e_0]$ and all $n\in \mathbb{N}$, $g(n,\e)\leq
g(n+1,\e)$ and
\[
\lim_{n\rightarrow\infty}\frac{g(n,\e)}{n}=0.
\]
Given a mistake function $g$, if $\e>\e_0$  set
$g(n,\e)=g(n,\e_0)$.
\end{definition}

For any subset of integers $\Lambda\subset [0,N]$ we will use the
family of distances in the metric space $X$ given by
$d_\Lambda(x,y)=\max \{d(f^ix,f^iy):i\in\Lambda\}$ and consider
the balls $B_{\Lambda}(x,\e)=\{y\in X:d_\Lambda(x,y)<\e\}$.

\begin{definition}
Let $g$ be a mistake function and let $\e>0$ and $n\ge 1$. The
mistake dynamical ball $B_n(g;x,\e)$ of radius $\e$ and length $n$
associated to $g$ is defined as follows:
\begin{eqnarray*}
B_n(g;x,\e)&=&\{ y\in X\mid y\in B_{\Lambda}(x,\e)~\hbox{for
some}~\Lambda\in I(g;n,\e)\}\\
&=&\bigcup_{\Lambda\in I(g;n,\e)}B_{\Lambda}(x,\e)
\end{eqnarray*}
where $I(g;n,\e)=\{ \Lambda\subset [0,n-1]\cap\mathbb{N}\mid \#
\Lambda \geq n-g(n,\e)\}$ and $\# \Lambda$ denotes the cardinality
of the set $\Lambda$. A set $F\subset Z$ is  $(g;n,\e)-$separated
for $Z$ if for every $x,y\in F$ implies $d_\Lambda(x,y)>\e,\forall
\Lambda\in I(g;n,\e) $. The dual definition is as follows.  A set
$E\subset Z$ is $(g;n,\e)-$spanning for $Z$ if for all $z\in Z$,
there exists $x\in E$ and $\Lambda\in I(g;n,\e) $ such that
$d_\Lambda(x,y)\leq \e$. Given $\delta>0$ and $\mu\in
\mathcal{M}(X,T)$, a set $S$ is  $(g;n,\e,\delta)-$spanning set if
$\mu(\bigcup_{x\in S}B_n(g;x,\e))>1-\delta$.
\end{definition}

Let $\mathcal{F}=\{ f_n\}_{n=1}^{\infty}$ be an ASP and let $g$ be
a mistake function. For $\mu\in \mathcal{E}(X,T)$, the definition
of asymptotically sub-additive measure-theoretic pressure is as
follows:
\begin{eqnarray*}P_\mu(g;T,\mathcal{F},n,\e,\delta)&=&\inf \left\{\sum_{x\in
S}\exp[\sup_{y\in B_n(g;x,\e)}f_n(y)]\mid S\ \hbox{is a}\
(g;n,\e,\delta)- \hbox{spanning set}\right\}\\
P_\mu(g;T,\mathcal{F})&=&\lim_{\d\rightarrow 0}\lim_{\e\rightarrow
0}\liminf_{n\rightarrow\infty}\frac 1 n\log
P_\mu(g;T,\mathcal{F},n,\e,\delta)
\end{eqnarray*}
the term $P_\mu(g;T,\mathcal{F})$ is an asymptotically
sub-additive measure-theoretic pressure of $T$ w.r.t.
$\mathcal{F}$ under a mistake function $g$. The following theorem
presents the main findings of this paper, which imply that the
small errors cannot affect the important factors of dynamical
systems. The following section presents the  proof.

\begin{maintheorem}\label{thm.A}Let $(X,T)$ be a TDS, let $g$ be a mistake function, and let $\mathcal{F}=\{ f_n\}_{n=1}^{\infty}$ be an
ASP. For each $\mu\in \mathcal{E}(X,T)$ with
$\mathcal{F}_*(\mu)\neq -\infty$, we have
\begin{eqnarray*}
P_{\mu}(g;T,\mathcal{F})=\lim_{\e\rightarrow
0}\liminf_{n\rightarrow\infty}\frac 1 n\log
P_\mu(g;T,\mathcal{F},n,\e,\delta) =h_{\mu}(T)+\mathcal{F}_*(\mu).
\end{eqnarray*}The formula remains true if we replace the
$\liminf$ by $\limsup$, and the value taken by  the $\liminf$(or
$\limsup$) is independent of $\d$ and the mistake function $g$.
\end{maintheorem}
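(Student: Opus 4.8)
The plan is to prove the two inequalities $P_{\mu}(g;T,\mathcal{F})\ge h_{\mu}(T)+\mathcal{F}_*(\mu)$ and $P_{\mu}(g;T,\mathcal{F})\le h_{\mu}(T)+\mathcal{F}_*(\mu)$, using the $\liminf$ in the first and the $\limsup$ in the second. Since $\liminf_n\le\limsup_n$ trivially, this simultaneously pins both versions to the common value. I will keep track of how the parameters enter: $\delta$ will appear only through prefactors of the form $1-O(\delta)$ that disappear after applying $\frac1n\log$, and $g$ will enter only through $g(n,\e)/n\to0$, which is what finally yields independence of $\delta$ and $g$. The first move is to reduce to sub-additive potentials: for each $k$ the ASP condition gives $\|f_n-\varphi_{n}^{k}\|\le n(1/k+o(1))$, so $\sup_{y\in B_n(g;x,\e)}f_n(y)$ and $\sup_{y\in B_n(g;x,\e)}\varphi_{n}^{k}(y)$ differ by at most $n(1/k+o(1))$ uniformly in $x$, whence $\frac1n\log P_\mu(g;T,\mathcal{F},n,\e,\delta)$ differs from its $\Phi_k$-analogue by $1/k+o(1)$. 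As $\mathcal{F}_*(\mu)=\lim_k\varphi_*^k$ with $\varphi_*^k:=\lim_n\frac1n\int\varphi_{n}^{k}\,d\mu$, it suffices to prove the identity with $h_\mu(T)+\varphi_*^k$ for each fixed sub-additive $\Phi=\{\varphi_n\}$ and then let $k\to\infty$.

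For fixed $\Phi$ with limit $\varphi_*$, the analytic inputs are Kingman's sub-additive ergodic theorem ($\frac1n\varphi_n\to\varphi_*$ $\mu$-a.e.), the Shannon--McMillan--Breiman theorem, and Katok's characterization of $h_\mu(T)$ via $(n,\e)$-spanning/separated sets of sets of measure close to $1$. A crucial auxiliary observation is that $\varphi_{n,\e}(x):=\sup_{y\in B_n(x,\e)}\varphi_n(y)$ is again sub-additive for fixed $\e$, so Kingman gives $\frac1n\varphi_{n,\e}\to\varphi_{*,\e}$, and a dominated-convergence argument (fixed $n$, $\e\downarrow0$) yields $\varphi_{*,\e}\downarrow\varphi_*$; this is what neutralizes the variation of sub-additive potentials over Bowen balls once the outer $\lim_{\e\to0}$ is taken.

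For the lower bound the main device is a measure estimate for mistake balls. Choosing a partition $\xi$ of diameter $<\e$ with $h_\mu(T,\xi)$ close to $h_\mu(T)$, expressing $B_\Lambda(x,\e)$ as a union of $\xi_0^{n-1}$-atoms agreeing with $x$ on $\Lambda$, and noting that at most $g(n,\e)$ coordinates are unconstrained, SMB gives, for $x$ in a good set $G$,
$$\mu\big(B_n(g;x,\e)\cap G\big)\le \#I(g;n,\e)\cdot(\#\xi)^{g(n,\e)}\cdot e^{-n(h_\mu(T,\xi)-\gamma)}.$$
The decisive combinatorial fact is $\frac1n\log\#I(g;n,\e)\to0$ and $\frac1n\,g(n,\e)\log\#\xi\to0$, both from $g(n,\e)/n\to0$ (Stirling), so the right-hand side is $e^{-n(h_\mu(T)-2\gamma)}$ for large $n$. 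Taking $G$ of measure $>1-\delta/2$ by Egorov so that $\frac1n\varphi_n\to\varphi_*$ uniformly, any $(g;n,\e,\delta)$-spanning set $S$ covers all but $O(\delta)$ of $G$, forcing $\#\{x\in S:B_n(g;x,\e)\cap G\neq\emptyset\}\gtrsim e^{n(h_\mu(T)-2\gamma)}$; for each such $x$ one has $\sup_{y\in B_n(g;x,\e)}\varphi_n(y)\ge\varphi_n(y_x)\ge n(\varphi_*-\gamma)$ with $y_x\in B_n(g;x,\e)\cap G$. Summing, then applying $\frac1n\log$, $\liminf_n$, $\e\to0$, $\gamma\to0$, yields the lower bound.

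For the upper bound, apply Katok's covering to $G$: a maximal $(n,\e)$-separated subset of $G$ is $(n,\e)$-spanning for $G$, is centred in $G$, has cardinality $\le e^{n(h_\mu(T)+\gamma)}$, and since $B_n(x,\e)\subseteq B_n(g;x,\e)$ it is automatically $(g;n,\e,\delta)$-spanning. The hard part of the theorem lives in bounding $\sup_{y\in B_n(g;x,\e)}\varphi_n(y)$ for $x\in G$: a mistake ball is \emph{not} sub-additive in $n$ under the same $g$, so one decomposes the orbit of $y\in B_\Lambda(x,\e)$ at the $\le g(n,\e)$ bad coordinates into maximal good runs, bounds $\varphi_n(y)\le\sum_{\text{runs}}\varphi_{l_j,\e}(T^{s_j}x)+g(n,\e)\|\varphi_1\|$ by sub-additivity, and controls the run-sum by $n(\varphi_{*,\e}+\gamma)$ through a packing argument (short runs and runs based outside $G$ contribute only $o(n)+O(\delta n)$, the bulk being handled by the uniform convergence $\frac1l\varphi_{l,\e}\to\varphi_{*,\e}$ from Egorov applied to $\varphi_{\cdot,\e}$). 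This gives $\frac1n\log P_\mu(g;T,\Phi,n,\e,\delta)\le h_\mu(T)+\varphi_{*,\e}+2\gamma+o(1)$; taking $\limsup_n$, then $\e\to0$ (using $\varphi_{*,\e}\downarrow\varphi_*$) and $\gamma\to0$, gives the upper bound. I expect precisely this run-decomposition/packing step---transferring the sub-additive Katok formula across the mistake function while keeping the limiting constant equal to $\varphi_*$ rather than $\int\varphi_1\,d\mu$---to be the principal obstacle, the combinatorial smallness of $I(g;n,\e)$ and the reduction to sub-additive potentials being comparatively routine. Combining the two bounds proves the equality for each $\Phi_k$ and, after $k\to\infty$, for $\mathcal{F}$; since no estimate used a lower bound on $\delta$ or any property of $g$ beyond $g(n,\e)/n\to0$, and the lower bound holds for $\liminf_n$ while the upper bound holds for $\limsup_n$, both the $\liminf$ and the $\limsup$ versions equal $h_\mu(T)+\mathcal{F}_*(\mu)$, independently of $\delta$ and $g$.
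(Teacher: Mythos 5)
Your overall architecture (two inequalities, $\liminf$ for the lower bound and $\limsup$ for the upper, with $\delta$ entering only through prefactors and $g$ only through $g(n,\epsilon)/n\to 0$) matches the paper's, and two of your ingredients are sound: the reduction from $\mathcal{F}$ to the sub-additive $\Phi_k$, and the observation that $\varphi_{n,\epsilon}(x)=\sup_{y\in B_n(x,\epsilon)}\varphi_n(y)$ is again sub-additive with $\varphi_{*,\epsilon}\downarrow\varphi_*$. However, your lower bound has a genuine gap at its central estimate. You claim that for a partition $\xi$ of diameter $<\epsilon$ the ball $B_\Lambda(x,\epsilon)$ is contained in the union of $\xi_0^{n-1}$-atoms whose addresses agree with that of $x$ on $\Lambda$, and deduce $\mu\big(B_n(g;x,\epsilon)\cap G\big)\le \#I(g;n,\epsilon)\,(\#\xi)^{g(n,\epsilon)}e^{-n(h_\mu(T,\xi)-\gamma)}$. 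Small diameter gives the inclusion you use in the upper bound, namely $\xi_n(x)\subset B_n(x,\epsilon)$, but not this reverse one: a point $y$ with $d(T^iy,T^ix)<\epsilon$ can lie on the other side of an atom boundary at every constrained coordinate $i\in\Lambda$, so the number of $\xi_n$-atoms meeting $B_\Lambda(x,\epsilon)$ is not one per constrained coordinate and can grow exponentially in $n$; the measure estimate collapses. This is exactly why the paper (following Katok, Pesin and Thompson) introduces the device you are missing: an auxiliary open cover $\mathcal{U}=\{U_1,\dots,U_k\}$ with $\overline{U_i}\subset C_i$ for $i\le m$, $\mu(C_i\setminus U_i)\le\gamma$, $\mu(\bigcup_{i=m+1}^{k}U_i)\le\gamma$, and $2\epsilon$ a Lebesgue number of $\mathcal{U}$; then $\overline{B}(T^jx,\epsilon)$ sits inside a single atom except at times $j$ with $T^jx\in\bigcup_{i>m}U_i$, whose number is at most $2\gamma n$ by Birkhoff--Egorov. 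This yields the paper's counts $p(B,\xi_{\Lambda_x})\le m^{2\gamma n}$ and $p(B,\xi_n)\le m^{2\gamma n+g(n,\epsilon)}$, which do the job your false inclusion was meant to do; without some such boundary-control argument the lower bound does not go through.

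Your upper bound is also incomplete at the step you yourself flag as the hard part: Egorov alone does not show that "runs based outside $G$ contribute $O(\delta n)$," since a single maximal good run whose base point $T^{s_j}x$ falls outside the Egorov set can have length comparable to $n$; one needs in addition a Birkhoff-type bound on the total orbit time spent outside the Egorov set, together with a subdivision of long runs into blocks of a fixed Egorov length. More to the point, the paper shows this machinery is avoidable: its Lemma \ref{yl21} first replaces $\varphi_n^k(y)$ by the Birkhoff sum $\sum_{i=0}^{n-1}\tfrac1l\varphi_l^k(T^iy)$ (the standard phase-averaged block trick, at the cost of an additive constant), after which uniform continuity of the single function $\tfrac1l\varphi_l^k$ handles the coordinates in $\Lambda$ and the at most $g(n,\epsilon)$ remaining coordinates cost $O(g(n,\epsilon))$; the problem then reduces to the additive measure-theoretic pressure of $\tfrac1l\varphi_l^k$ over ordinary Bowen balls, for which the known spanning-set formula of He--Lv--Zhou gives $h_\mu(T)+\int\tfrac1l\varphi_l^k\,d\mu$, and one finishes by letting $l\to\infty$ and $k\to\infty$. (A minor structural difference: in the lower bound the paper works with $f_n$ directly, via Feng--Huang's a.e. convergence $\tfrac1n f_n\to\mathcal{F}_*(\mu)$ for ergodic $\mu$, rather than reducing to $\Phi_k$; your reduction there is legitimate, but the atom-counting step above must be repaired for the proof to stand.)
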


This result generalizes Katok's entropy formula \cite{katok}, and
the results in \cite{hlz} and \cite{zc}. The main virtue of this
approach  is that we do not require any condition on the ASP and
the TDS. The proof of the above theorem requires the following
lemma.

\begin{lemma} \label{yl21}Let $(X,T)$ be a TDS, let $g$ be a mistake function, and let $\mathcal{F}=\{ f_n\}_{n=1}^{\infty}$ be an
ASP. Given some $k>0$, there exist  sub-additive potentials
$\Phi_k=\{\varphi_{n}^{k}\}_{n\geq 1}$ such that for any positive
integer $l$ and small number $\eta>0$, there exists $\e_0>0$ so
that for any $0<\e<\e_0$, the following inequalities hold for
sufficiently large $n$
\[
\sup_{y\in B_n(g;x,\e)}f_n(y)\leq \sum_{i=0}^{n-1}\frac 1 l
\varphi_{l}^{k}(T^ix)+C( g(n,\e)+1)+n(\frac 1 k+\eta)
\]
where $C$ is a constant.
\end{lemma}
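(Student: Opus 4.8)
The plan is to separate the three sources of discrepancy between $\sup_{y\in B_n(g;x,\e)}f_n(y)$ and the target average $\sum_{i=0}^{n-1}\frac1l\varphi_l^k(T^ix)$: the asymptotically sub-additive approximation of $f_n$ by $\varphi_n^k$, the comparison of the $n$-step potential $\varphi_n^k$ with a Birkhoff-type average of the fixed function $\varphi_l^k$, and finally the passage from the orbit of an arbitrary point $y$ in the mistake ball to the orbit of its center $x$.

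First I would fix, for the given $k$, the sub-additive family $\Phi_k=\{\varphi_n^k\}$ supplied by the definition of an ASP, and read off from $\limsup_n\frac1n\|f_n-\varphi_n^k\|\le\frac1k$ that there is $N_0$ with $f_n(y)\le\varphi_n^k(y)+n(\frac1k+\frac\eta2)$ for every $y\in X$ and all $n\ge N_0$. This reduces the whole problem to an upper bound for $\varphi_n^k(y)$ when $y\in B_n(g;x,\e)$.

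Second, the heart of the argument is an averaging estimate for the single sub-additive family. Put $M=\max_{1\le j\le l}\|\varphi_j^k\|$, which is finite since each $\varphi_j^k\in C(X)$ and $X$ is compact. For each phase $q\in\{0,\dots,l-1\}$ I would split $[0,n-1]$ into an initial block of length $q$, consecutive full blocks of length $l$, and a terminal remainder, and apply the sub-additivity $\varphi_{n+m}^k(z)\le\varphi_n^k(z)+\varphi_m^k(T^nz)$ repeatedly. Averaging the $l$ resulting inequalities over $q$ makes every index $i\in\{0,\dots,n-l\}$ occur exactly once as the start of a full block, while all initial and remainder terms are absorbed into a bounded error of size $O(lM)$; dividing by $l$ this yields $\varphi_n^k(y)\le\frac1l\sum_{i=0}^{n-1}\varphi_l^k(T^iy)+3M$ once $n\ge l$ (the $l-1$ tail indices $n-l+1,\dots,n-1$ being folded into the error via $M$). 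The bookkeeping of exactly which indices appear, and with what multiplicity, after averaging over the phases is the step that demands the most care, and I expect it to be the main obstacle; the sign of the $\varphi_j^k$ is unconstrained, so it is essential to bound $|\varphi_j^k|$ rather than a positive part.

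Finally I would pass from $y$ to $x$ term by term, using that $\varphi_l^k$ is a single continuous, hence uniformly continuous, function on $X$. Choosing $\e_0>0$ so that $d(a,b)<\e_0$ forces $|\varphi_l^k(a)-\varphi_l^k(b)|<\frac\eta2$, I split $\{0,\dots,n-1\}$ by a witnessing $\Lambda\in I(g;n,\e)$ with $y\in B_\Lambda(x,\e)$: on the good indices $i\in\Lambda$ one has $d(T^ix,T^iy)<\e$ and hence $\varphi_l^k(T^iy)\le\varphi_l^k(T^ix)+\frac\eta2$, so after the factor $\frac1l$ these contribute at most $\frac nl\cdot\frac\eta2\le\frac{n\eta}2$; on the at most $g(n,\e)$ bad indices I simply bound $\varphi_l^k(T^iy)-\varphi_l^k(T^ix)\le 2M$. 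Combining the three estimates and setting $C=3M$ (which dominates both the $O(M)$ block error and the per-mistake cost $2M$), I obtain $\sup_{y\in B_n(g;x,\e)}f_n(y)\le\sum_{i=0}^{n-1}\frac1l\varphi_l^k(T^ix)+C(g(n,\e)+1)+n(\frac1k+\eta)$ for all large $n$, as claimed, with $C$ depending only on $k$ and $l$.
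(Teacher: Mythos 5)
Your proposal is correct and follows essentially the same three-step argument as the paper's proof: approximate $f_n$ by $\varphi_n^k$ using the ASP property, bound $\varphi_n^k(y)$ by the Birkhoff-type average $\sum_{i}\frac1l\varphi_l^k(T^iy)$ via the phase-averaging sub-additivity trick, and then transfer from the orbit of $y$ to that of $x$ using uniform continuity of $\varphi_l^k$ on the indices in $\Lambda$ and the crude bound $2M$ on the at most $g(n,\e)$ remaining indices. The only differences are cosmetic: you keep the terminal remainder as a separate block (so $M=\max_{j\le l}\|\varphi_j^k\|$ suffices) where the paper merges it into a block of length up to $2l$, and you split $\eta$ between the ASP and continuity steps where the paper absorbs it elsewhere.
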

\begin{proof}
Given some $k>0$, since $\mathcal{F}=\{ f_n\}_{n=1}^{\infty}$ is
an ASP, there exist  sub-additive potentials
$\Phi_k=\{\varphi_{n}^{k}\}_{n\geq 1}$, such that $
\limsup_{n\rightarrow\infty}\frac{1}{n}||f_n-\varphi_{n}^{k}||\leq
\frac 1 k $. This implies that
\begin{eqnarray}\label{ds21}
f_n(x)\leq \varphi_{n}^{k}(x) +\frac n k,\ \ \forall x\in X
\end{eqnarray}
for sufficiently large $n$.

Let us fix any  positive integer $l$. Since $\frac 1 l
\varphi_{l}^{k}(x)$ is continuous,  for each $\eta>0$, there
exists $\e_0>0$ such that for any $0<\e<\e_0$, we have
\[
d(x,y)<\e\Rightarrow d(\frac 1 l \varphi_{l}^{k}(x),\frac 1 l
\varphi_{l}^{k}(y))<\eta.
\]
Note that for each $y\in B_n(g;x,\e)$,
 there exists $\Lambda\subset I(g;n,\e)$ so that $y\in B_\Lambda
 (x,\e)$, therefore
 \begin{eqnarray}\label{ds22}
\sum_{i=0}^{n-1}\frac 1 l
\varphi_{l}^{k}(T^iy)&\leq&\sum_{i\in\Lambda}(\frac 1 l
\varphi_{l}^{k}(T^ix)+\eta)+\sum_{i\notin\Lambda} ||\frac 1 l
\varphi_{l}^{k}||\notag \\
&\leq&\sum_{i=0}^{n-1}(\frac 1 l \varphi_{l}^{k}(T^ix)+\eta)+C_1
g(n,\e)
 \end{eqnarray}
where  $C_1=2(||\frac 1 l \varphi_{l}^{k}||+\eta)$.

For each sufficiently large $n$, it is possible to rewrite $n$ as
$n=sl+r$, where $0\leq s, 0\leq r<l$. Then, for any $0\leq j<l$,
we have
 \[
\varphi_{n}^{k}(x)\leq
\varphi_{j}^{k}(x)+\varphi_{l}^{k}(T^jx)+\cdots
+\varphi_{l}^{k}(T^{(s-2)l}T^jx)+\varphi_{l+r-j}^{k}(T^{(s-1)l}T^jx)
\]
where $\varphi_{0}^{k}(x)\equiv 0$. Summing $j$ from $0$ to $l-1$
leads to
\[
l\varphi_{n}^{k}(x)\leq 2l C_2+
\sum_{i=0}^{(s-1)l-1}\varphi_{l}^{k}(T^ix)
\]
where $C_2=\max_{j=1,\cdots 2l}\max_{x\in X}|\varphi_{j}^{k}(x)|$.
Hence,
\begin{eqnarray}\label{ds23}
\varphi_{n}^{k}(x)\leq 2 C_2+\sum_{i=0}^{(s-1)l-1}\frac 1
l\varphi_{l}^{k}(T^ix)\leq  4C_2+\sum_{i=0}^{n-1}\frac 1
l\varphi_{l}^{k}(T^ix).
\end{eqnarray}
Let $C=\max \{C_1,4C_2\}$, we have that
\begin{eqnarray*}
\sup_{y\in B_n(g;x,\e)}f_n(y)&\leq& \sup_{y\in B_n(g;x,\e)}
(C+\sum_{i=0}^{n-1}\frac 1 l\varphi_{l}^{k}(T^iy) +\frac n k)\\
&\leq&\sum_{i=0}^{n-1}(\frac 1 l \varphi_{l}^{k}(T^ix)+\eta)+C(
g(n,\e)+1)+\frac n k.
\end{eqnarray*}
where the  first inequality follows from (\ref{ds21}) and
(\ref{ds23}), and the second inequality follows from (\ref{ds22}).
This completes the proof of the lemma.
\end{proof}

Let $\mathcal{F}=\{ f_n\}_{n=1}^{\infty}$ be an ASP. The following
discussion defines the  topological version of asymptotically
sub-additive pressure under   a mistake function. This study first
gives an equivalent definition of asymptotically sub-additive
topological pressure via spanning set, and then gives a new
definition of  asymptotically sub-additive topological pressure
under a mistake function.

For each positive integer $n$ and $\e>0$, put
\begin{eqnarray*}
P^*(T,\mathcal{F},n,\e)&=&\inf\{\sum_{x\in F}\exp [\sup_{y\in
B_n(x,\e)}f_n(y)]: F \mbox{ is an }
   (n,\epsilon)-\mbox{spanning subset of } X\}\\
P^*(T,\mathcal{F})&=&\lim_{\epsilon\rightarrow
0}\limsup_{n\rightarrow \infty}\frac{1}{n}\log
P^*(T,\mathcal{F},n,\e).
\end{eqnarray*}
The following lemma says that this newly defined quantity equals
the asymptotically sub-additive topological pressure defined by
separated sets.

\begin{proposition} \label{mt21}
$P^*(T,\mathcal{F})=P(T,\mathcal{F})$.
\end{proposition}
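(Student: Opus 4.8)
The plan is to establish the two inequalities $P(T,\mathcal{F})\le P^*(T,\mathcal{F})$ and $P^*(T,\mathcal{F})\le P(T,\mathcal{F})$ separately. The first is a soft comparison of separated and spanning sets that uses nothing about $\mathcal{F}$ beyond continuity of each $f_n$; the second is the substantive half, where the asymptotically sub-additive structure enters through Lemma \ref{yl21}.

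For $P(T,\mathcal{F})\le P^*(T,\mathcal{F})$, fix $n$ and $\epsilon$. Let $E$ be any $(n,2\epsilon)$-separated set and $F$ any $(n,\epsilon)$-spanning set, and for each $x\in E$ choose $\phi(x)\in F$ with $d_n(x,\phi(x))\le\epsilon$. The assignment $\phi$ is injective: if $\phi(x)=\phi(x')$ with $x\ne x'$, then $d_n(x,x')\le 2\epsilon$, contradicting $2\epsilon$-separation. Since $d_n(x,\phi(x))\le\epsilon$ and $f_n$ is continuous, $f_n(x)\le\sup_{y\in B_n(\phi(x),\epsilon)}f_n(y)$ (the open/closed ball distinction being immaterial after the limit in $\epsilon$). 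Using injectivity and positivity of the summands,
$$\sum_{x\in E}e^{f_n(x)}\le\sum_{x\in E}e^{\sup_{y\in B_n(\phi(x),\epsilon)}f_n(y)}\le\sum_{w\in F}e^{\sup_{y\in B_n(w,\epsilon)}f_n(y)}.$$
Taking the supremum over $E$ on the left and the infimum over $F$ on the right gives $P(T,\mathcal{F},n,2\epsilon)\le P^*(T,\mathcal{F},n,\epsilon)$; applying $\tfrac1n\log$, $\limsup_{n}$ and $\epsilon\to0$ yields $P(T,\mathcal{F})\le P^*(T,\mathcal{F})$.

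For the reverse inequality I would exploit that a maximal $(n,\epsilon)$-separated set $E_n$ is automatically $(n,\epsilon)$-spanning, hence an admissible competitor in the infimum defining $P^*(T,\mathcal{F},n,\epsilon)$. Fix $k$, a positive integer $l$ and $\eta>0$, let $\Phi_k=\{\varphi_n^k\}$ and $C$ be as in Lemma \ref{yl21}, and apply that lemma with the trivial mistake function $g\equiv0$, for which $B_n(g;x,\epsilon)=B_n(x,\epsilon)$ and $g(n,\epsilon)=0$. This controls the only dangerous quantity, $\sup_{y\in B_n(x,\epsilon)}f_n(y)$, by $\sum_{i=0}^{n-1}\tfrac1l\varphi_l^k(T^ix)+C+n(\tfrac1k+\eta)$, so that
$$P^*(T,\mathcal{F},n,\epsilon)\le e^{C+n(1/k+\eta)}\sup\Big\{\sum_{x\in E}e^{\sum_{i=0}^{n-1}\frac1l\varphi_l^k(T^ix)}:E\ (n,\epsilon)\text{-separated}\Big\}.$$
Taking $\tfrac1n\log$, $\limsup_n$ and $\epsilon\to0$ collapses the right-hand side to the classical topological pressure of the continuous potential $\tfrac1l\varphi_l^k$, giving $P^*(T,\mathcal{F})\le\tfrac1k+\eta+P_{\mathrm{top}}(T,\tfrac1l\varphi_l^k)$. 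Finally I would let $l\to\infty$, then $k\to\infty$ and $\eta\to0$: by the sub-additive thermodynamic formalism developed in \cite{cfh}, $\inf_{l\ge1}P_{\mathrm{top}}(T,\tfrac1l\varphi_l^k)=P(T,\Phi_k)$, the sub-additive topological pressure of $\Phi_k$, while $\limsup_n\tfrac1n\|f_n-\varphi_n^k\|\le\tfrac1k$ forces $|P(T,\mathcal{F})-P(T,\Phi_k)|\le\tfrac1k$; hence $P^*(T,\mathcal{F})\le P(T,\mathcal{F})$.

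The main obstacle is precisely the step that replaces $\sup_{y\in B_n(x,\epsilon)}f_n(y)$ by a quantity comparable to the separated-set sums. A general ASP has no uniform modulus of continuity, so this supremum may exceed $f_n(x)$ by an amount that is not obviously $o(n)$, and the naive spanning-equals-separated argument breaks down. Lemma \ref{yl21} circumvents this by passing to Birkhoff sums of the single continuous function $\tfrac1l\varphi_l^k$, whose oscillation over dynamical balls is genuinely controlled; the price is an error term that vanishes only in the iterated limit $l,k\to\infty$. Thus the delicate point is not the combinatorics of spanning and separated sets but the final limit identification $\lim_{k}\lim_{l}\big[\tfrac1k+P_{\mathrm{top}}(T,\tfrac1l\varphi_l^k)\big]=P(T,\mathcal{F})$, which rests on the established sub-additive thermodynamic machinery rather than on elementary set comparison.
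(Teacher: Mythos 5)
Your first inequality, $P(T,\mathcal{F})\le P^*(T,\mathcal{F})$, is correct and is essentially the paper's own argument: an injective map from a separated set into a spanning set, with a factor-of-two adjustment of the radius. The second inequality is where you diverge from the paper, and it contains a genuine gap at its final step. Your reduction up to
\[
P^*(T,\mathcal{F})\ \le\ \tfrac1k+\eta+P_{\mathrm{top}}\bigl(T,\tfrac1l\varphi^k_l\bigr)
\qquad\text{for all } l,\ k,\ \eta>0
\]
(maximal separated sets are spanning, plus Lemma \ref{yl21} with $g\equiv 0$) is fine. The problem is the assertion that $\inf_{l\ge1}P_{\mathrm{top}}(T,\tfrac1l\varphi^k_l)=P(T,\Phi_k)$ ``by the sub-additive thermodynamic formalism of \cite{cfh}''. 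No such unconditional identity is proved in \cite{cfh}, and it cannot be cited away. By Walters' variational principle \cite{wa1}, $\inf_l P_{\mathrm{top}}(T,\tfrac1l\varphi^k_l)=\inf_l\sup_{\mu}\bigl[h_\mu(T)+\tfrac1l\int\varphi^k_l\,d\mu\bigr]$, while the variational principle of \cite{cfh} gives $P(T,\Phi_k)=\sup_{\mu}\bigl[h_\mu(T)+\inf_l\tfrac1l\int\varphi^k_l\,d\mu\bigr]$ (the limit defining $(\Phi_k)_*(\mu)$ is an infimum, by sub-additivity of $l\mapsto\int\varphi^k_l\,d\mu$ for invariant $\mu$). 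So what the literature actually gives you is $P(T,\Phi_k)\le\inf_lP_{\mathrm{top}}(T,\tfrac1l\varphi^k_l)$ --- the direction you do not need. The direction you do need is the minimax exchange $\inf_l\sup_\mu\le\sup_\mu\inf_l$, and since the entropy map $\mu\mapsto h_\mu(T)$ is not upper semi-continuous on a general TDS, no standard minimax theorem applies; finite-level approximation results of this type are established only under upper semi-continuity of entropy, which is precisely the kind of hypothesis this paper is written to avoid. In effect, your last step assumes an unconditional variational-type identity that is at least as hard as the statement being proved.

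The repair is to avoid invoking Walters (i.e., taking $\sup_\mu$) before taking $\inf_l$: run a Misiurewicz-type argument with the weights $\sup_{y\in B_n(x,\epsilon)}f_n(y)$ on maximal separated sets, extract a single limit measure $\mu^*$ that is independent of $l$ and $k$, conclude $\limsup_n\frac1n\log P^*(T,\mathcal{F},n,\epsilon)\le h_{\mu^*}(T)+\int\tfrac1l\varphi^k_l\,d\mu^*+\eta+\tfrac1k$ for every $l,k$ simultaneously, and only then take $\inf_l$ and $k\to\infty$ to get $P^*(T,\mathcal{F})\le h_{\mu^*}(T)+\mathcal{F}_*(\mu^*)\le P(T,\mathcal{F})$ via Theorem \ref{dl11}. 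The paper itself does something entirely different and purely elementary: it builds the maximal $(n,\epsilon)$-separated set greedily in decreasing order of $f_n$-values, claims that then $\sup_{y\in B_n(x,\epsilon)}f_n(y)=f_n(x)$ at each chosen point, and thereby dominates the spanning sum by a separated sum with no approximation by the potentials $\varphi^k_l$ at all. (Note that this greedy equality is itself transparent only for the first chosen point, since a ball around a later point may meet earlier balls where $f_n$ is larger; so your instinct that this comparison is the delicate step is sound --- but the minimax citation does not close it.)
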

\begin{proof}Let $F$ be an $(n,\epsilon/2)-$spanning subset of
$X$, and let  $E$ be  an $(n,\epsilon)-$separated subset of $X$.
Define a map $\phi:E\rightarrow F$ by choosing for each $x\in E$
some $\phi(x)\in F$ such that $d_n(x,\phi(x))\leq \e/2$. Then, it
is easy to see that $\phi$ is injective. Therefore,
\[
P^*(T,\mathcal{F},n,\e/2)\geq \sup\{\sum_{y\in E}e^{f_n(y)}: E
\mbox{ is an }
   (n,\epsilon)-\mbox{separated subset of } X\} .
\]
This immediately yields  $P^*(T,\mathcal{F})\geq
P(T,\mathcal{F})$.

Next, we prove that $P^*(T,\mathcal{F})\leq P(T,\mathcal{F})$.
Given $n\in \mathbb{N}$ and $\e>0$. Choose $x_1\in X$  with
$f_n(x_1)=\sup_{x\in X}f_n(x)$, and then choose $x_2\in X\setminus
B_n(x_1,\e)$ with $f_n(x_2)=\sup_{x\in X\setminus
B_n(x_1,\e)}f_n(x)$. We continue this process. More precise, in
step $m$ we choose $x_m\in X\setminus
\bigcup_{j=1}^{m-1}B_n(x_j,\e)$ with $f_n(x_m)=\sup_{x\in
X\setminus \bigcup_{j=1}^{m-1}B_n(x_j,\e)}f_n(x)$. This process
stops at some step $l$, and produces a maximal $(n,\e)-$separated
set $E=\{ x_1,x_2,\cdots, x_l\}$(meaning that $E$ is also an
$(n,\e)-$spanning set of $X$). Therefore,
\begin{eqnarray*}
P^*(T,\mathcal{F},n,\e)&\leq&\sum_{x\in E}\exp [\sup_{y\in
B_n(x,\e)}f_n(y)]=\sum_{x\in E}
e^{f_n(x)}\\
&\leq& \sup\{\sum_{y\in E}f_n(y): E \mbox{ is an }
   (n,\epsilon)-\mbox{separated subset of } X\}
\end{eqnarray*}
This  immediately implies  that $P^*(T,\mathcal{F})\leq
P(T,\mathcal{F})$, and completes the proof.
\end{proof}

Next, this study modifies the  definition of $P(T,\mathcal{F})$ to
define asymptotically sub-additive topological pressure under a
mistake function.

Let $\mathcal{F}=\{ f_n\}_{n=1}^{\infty}$ be an ASP and let $g$ be
a mistake function. For each $n\in \mathbb{N}$ and $\e>0$, put
\begin{eqnarray*}
P(g;T,\mathcal{F},n,\e)&=&\sup\{\sum_{x\in F}e^{f_n(x)}: F \mbox{
is an }
   (g;n,\epsilon)-\mbox{separated  subset of } X\}\\
P(g;T,\mathcal{F})&=&\lim_{\epsilon\rightarrow
0}\limsup_{n\rightarrow \infty}\frac{1}{n}\log
P(g;T,\mathcal{F},n,\e).
\end{eqnarray*}
The term $P(g;T,\mathcal{F})$ is the asymptotically sub-additive
topological pressure of $T$ w.r.t. $\mathcal{F}$  under a mistake
function $g$. The asymptotically sub-additive topological pressure
under mistake function  $P(g;T,\mathcal{F})$  equals
$P(T,\mathcal{F})$, which means that the dynamical system is self
adaptable if the amount of errors decrease as  time goes by.

\begin{maintheorem}\label{thm.B}Let $(X,T)$ be a TDS, let $g$ be a mistake function, and let $\mathcal{F}=\{ f_n\}_{n=1}^{\infty}$ be an
ASP. Then $P(g;T,\mathcal{F})=P(T,\mathcal{F})$.
\end{maintheorem}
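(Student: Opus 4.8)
The plan is to establish the two inequalities separately. The inequality $P(g;T,\mathcal{F})\le P(T,\mathcal{F})$ is immediate: since $[0,n-1]\cap\mathbb{N}\in I(g;n,\e)$ for every $n$ and $\e$, the defining condition of a $(g;n,\e)$-separated set forces $d_n(x,y)=d_{[0,n-1]}(x,y)>\e$ for all distinct $x,y$, so every $(g;n,\e)$-separated set is automatically $(n,\e)$-separated. Taking the supremum over a smaller family yields $P(g;T,\mathcal{F},n,\e)\le P(T,\mathcal{F},n,\e)$ for all $n,\e$, and passing to the limits gives the inequality.

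The substance is the reverse inequality $P(g;T,\mathcal{F})\ge P(T,\mathcal{F})$. Fix $\e>0$ and let $E$ be an arbitrary $(n,2\e)$-separated subset of $X$, with total exponential weight $W:=\sum_{x\in E}e^{f_n(x)}$. I would extract from $E$ a $(g;n,\e)$-separated subset carrying a definite proportion of $W$. To this end, form the graph $G$ on vertex set $E$ in which $x$ and $y$ are joined precisely when they fail to be $(g;n,\e)$-separated, i.e.\ when $d_\Lambda(x,y)\le\e$ for some $\Lambda\in I(g;n,\e)$; by construction every independent set of $G$ is a $(g;n,\e)$-separated set. If $G$ has maximum degree at most $M_n$, then $G$ is $(M_n+1)$-colourable, so $E$ splits into at most $M_n+1$ $(g;n,\e)$-separated classes, and by pigeonhole at least one such class $F$ satisfies $\sum_{x\in F}e^{f_n(x)}\ge W/(M_n+1)$. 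Taking the supremum over $E$ gives $P(g;T,\mathcal{F},n,\e)\ge P(T,\mathcal{F},n,2\e)/(M_n+1)$.

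The main obstacle is therefore the degree bound, and this is where the hypotheses $g(n,\e)/n\to0$ and the compactness of $X$ enter. Writing $g:=g(n,\e)$, a neighbour $y$ of $x$ lies in some $B_\Lambda(x,\e)$ with $\#\Lambda\ge n-g$, and since shrinking $\Lambda$ enlarges the ball one may assume $\#\Lambda=n-g$; thus the neighbours of $x$ are contained in a union of $\binom{n}{g}$ dynamical balls $B_\Lambda(x,\e)$. I would bound the number of points of $E$ inside a single such $B_\Lambda(x,\e)$ as follows: cover $X$ by finitely many sets of diameter at most $2\e$, say $L_\e$ of them. On the coordinates in $\Lambda$ any two points of $E\cap B_\Lambda(x,\e)$ are within $2\e$ of one another, so their mutual $(n,2\e)$-separation must be witnessed on one of the $g$ coordinates of $\Lambda^{c}$; hence distinct such points must realise distinct patterns when the $\Lambda^{c}$-coordinates are recorded modulo the chosen cover, giving at most $L_\e^{\,g}$ of them. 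Summing over the $\binom ng$ choices of $\Lambda$ yields $M_n\le \binom ng\, L_\e^{\,g}$.

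Finally I would take logarithms and pass to the limit. Since $g(n,\e)/n\to0$, Stirling's estimate gives $\tfrac1n\log\binom ng\to0$, and likewise $\tfrac gn\log L_\e\to0$, so $\tfrac1n\log(M_n+1)\to0$. Therefore $\limsup_n\tfrac1n\log P(g;T,\mathcal{F},n,\e)\ge\limsup_n\tfrac1n\log P(T,\mathcal{F},n,2\e)$, and letting $\e\to0$ produces $P(g;T,\mathcal{F})\ge P(T,\mathcal{F})$. Combined with the easy direction this gives the claimed equality; I note that the bound $M_n$ depends on the mistake function only through the defining property $g(n,\e)/n\to0$, which is exactly what underlies the assertion that small, sublinear errors do not affect the pressure.
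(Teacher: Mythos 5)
Your proof is correct, and it takes a genuinely different route from the paper's. The easy inequality $P(g;T,\mathcal{F})\le P(T,\mathcal{F})$ is handled exactly as in the paper, but for the hard direction the paper goes through measures: for each ergodic $\mu$ with $\mathcal{F}_*(\mu)\neq-\infty$, every $(g;n,\e)$-spanning set is a $(g;n,\e,\d)$-spanning set, so $P^*(g;T,\mathcal{F})\ge P_\mu(g;T,\mathcal{F})$; Theorem A identifies $P_\mu(g;T,\mathcal{F})$ with $h_\mu(T)+\mathcal{F}_*(\mu)$, Proposition \ref{mt22} converts the spanning-set quantity $P^*(g;T,\mathcal{F})$ back into the separated-set quantity $P(g;T,\mathcal{F})$, and Feng--Huang's variational principle (Theorem \ref{dl11} with Remark \ref{zhu1}, plus the trivial case $\mathcal{F}_*(\mu)\equiv-\infty$) recovers $P(T,\mathcal{F})$ as $\sup\{h_\mu(T)+\mathcal{F}_*(\mu)\}$. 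You instead stay entirely on the topological side: from an $(n,2\e)$-separated set you extract, by bounding the maximum degree of the ``failure-to-separate'' graph and greedily colouring it, a $(g;n,\e)$-separated subset retaining a $1/(M_n+1)$ fraction of the exponential weight. Your counting is sound: within a single closed ball $\{w:d_\Lambda(x,w)\le\e\}$ with $\#\Lambda=n-g(n,\e)$, two points of an $(n,2\e)$-separated set can only be distinguished on the $g(n,\e)$ coordinates outside $\Lambda$, so a cover of $X$ by $L_\e$ sets of diameter at most $2\e$ injects them into at most $L_\e^{\,g(n,\e)}$ patterns, giving $M_n\le\binom{n}{g(n,\e)}L_\e^{\,g(n,\e)}$; and the entropy form of Stirling, $\frac1n\log\binom{n}{g}\le H(g/n)\to0$ when $g/n\to0$ (the cruder bound $\binom{n}{g}\le n^{g}$ would \emph{not} suffice, since $\frac{g}{n}\log n$ need not vanish), makes this sub-exponential. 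What your approach buys: it is elementary and self-contained --- no Birkhoff or Shannon--McMillan--Breiman theorem, no Egorov, no variational principle --- and it never actually uses the ASP hypothesis, so it proves $P(g;T,\mathcal{F})=P(T,\mathcal{F})$ for an arbitrary sequence of continuous functions $\{f_n\}$. What the paper's route buys: Theorem A and Proposition \ref{mt22} are established anyway as results of independent interest, so Theorem B follows in a few lines, and the chain of inequalities simultaneously sandwiches the topological pressure between measure-theoretic pressures, which is the main theme of the paper.
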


Theorem B and theorem \ref{dl11} immediately imply the following
corollary, i.e.,  the variational principle for the asymptotically
sub-additive topological pressure under a mistake function.

\begin{corollary}Let $(X,T)$ be a TDS, let $g$ be a mistake function, and let $\mathcal{F}=\{ f_n\}_{n=1}^{\infty}$ be an
ASP. Then
$$
\begin{array}{l}
P(g;T,\mathcal{F})= \left \{
\begin{array}{l}
~~~~~~-\infty, ~~~~~~~~~~~~~~~~~~~~~\mbox{ if }\mathcal{F}_*(\mu)=-\infty \mbox{ for all } \mu\in \mathcal{M}(X,T),\\
\sup \{h_{\mu}(T)+\mathcal{F}_*(\mu):\mu\in \mathcal{M}(X,T),
\mathcal{F}_*(\mu)\neq -\infty\}, \mbox{ otherwise}.
\end{array}
\right.
\end{array}
$$
\end{corollary}

 To prove theorem B, we need an analogue  of
proposition  \ref{mt21}. Thus, we define
\begin{eqnarray*}
P^*(g;T,\mathcal{F},n,\e)&=&\inf\{\sum_{x\in F}\exp [\sup_{y\in
B_n(g;x,\e)}f_n(y)]: F \mbox{ is a }
   (g;n,\epsilon)-\mbox{spanning subset of } X\}\\
P^*(g;T,\mathcal{F})&=&\lim_{\epsilon\rightarrow
0}\limsup_{n\rightarrow \infty}\frac{1}{n}\log
P(g;T,\mathcal{F},n,\e).
\end{eqnarray*}
And the following lemma holds.

\begin{proposition}\label{mt22}$P(2g;T,\mathcal{F})\leq P^*(g;T,\mathcal{F})\leq P(g;T,\mathcal{F})$.
\end{proposition}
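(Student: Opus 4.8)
The plan is to prove the two inequalities separately, adapting the two halves of the proof of Proposition \ref{mt21} to the mistake-function setting (note that $2g$, defined by $(2g)(n,\e)=2g(n,\e)$, is again a mistake function). For the right-hand inequality $P^*(g;T,\mathcal{F})\le P(g;T,\mathcal{F})$ I would use that a maximal separated set is spanning, together with Lemma \ref{yl21} to control the oscillation of $f_n$ over a mistake dynamical ball; for the left-hand inequality $P(2g;T,\mathcal{F})\le P^*(g;T,\mathcal{F})$ I would build an injection from a $(2g;n,2\e)$-separated set into a $(g;n,\e)$-spanning set, the doubling of $g$ being forced by intersecting two admissible index sets.

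For the right-hand inequality, fix $n$ and $\e$ and let $E$ be a maximal $(g;n,\e)$-separated subset of $X$. By maximality $E$ is also $(g;n,\e)$-spanning: for $z\notin E$ the set $E\cup\{z\}$ fails to be separated, so some $x\in E$ and $\Lambda\in I(g;n,\e)$ satisfy $d_\Lambda(x,z)\le\e$. Hence $E$ is admissible in the infimum defining $P^*$, giving $P^*(g;T,\mathcal{F},n,\e)\le\sum_{x\in E}\exp[\sup_{y\in B_n(g;x,\e)}f_n(y)]$. I would then aim to replace $\sup_{y\in B_n(g;x,\e)}f_n(y)$ by $f_n(x)$ up to an $o(n)$ error. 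Lemma \ref{yl21} bounds this supremum by $\sum_{i=0}^{n-1}\tfrac1l\varphi^k_l(T^ix)+C(g(n,\e)+1)+n(\tfrac1k+\eta)$, whose middle term is $o(n)$ since $g(n,\e)/n\to0$; the delicate remaining step is to compare the smoothed sum $\sum_i\tfrac1l\varphi^k_l(T^ix)$ with $f_n(x)$ using (\ref{ds21}) and (\ref{ds23}). Since $E$ is $(g;n,\e)$-separated we have $\sum_{x\in E}e^{f_n(x)}\le P(g;T,\mathcal{F},n,\e)$, and after taking $\tfrac1n\log$ and passing to the limits $n\to\infty$, $\e\to0$, then $l\to\infty$, $k\to\infty$, $\eta\to0$, the claimed inequality follows.

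For the left-hand inequality I would establish the finite-scale bound $P(2g;T,\mathcal{F},n,2\e)\le P^*(g;T,\mathcal{F},n,\e)$ and then let $\e\to0$ (so that $2\e\to0$). Let $E$ be $(2g;n,2\e)$-separated and $F$ be $(g;n,\e)$-spanning. For each $x\in E$ choose $\phi(x)\in F$ and $\Lambda_x\in I(g;n,\e)$ with $d_{\Lambda_x}(\phi(x),x)\le\e$. If $\phi(x)=\phi(x')$ for $x\ne x'$, then on $\Lambda:=\Lambda_x\cap\Lambda_{x'}$ the triangle inequality gives $d_\Lambda(x,x')\le2\e$, while $\#\Lambda\ge\#\Lambda_x+\#\Lambda_{x'}-n\ge n-2g(n,\e)$, so $\Lambda\in I(2g;n,\e)\subseteq I(2g;n,2\e)$; this contradicts the $(2g;n,2\e)$-separation of $E$, so $\phi$ is injective. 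As $x\in B_n(g;\phi(x),\e)$ (modulo the strict/non-strict convention), positivity of the exponentials and injectivity yield $\sum_{x\in E}e^{f_n(x)}\le\sum_{p\in F}\exp[\sup_{y\in B_n(g;p,\e)}f_n(y)]$, and taking the supremum over $E$ and infimum over $F$ gives the bound.

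The main obstacle is the oscillation control in the right-hand inequality. For a general asymptotically sub-additive family the naive identity $\sup_{y\in B_n(g;x,\e)}f_n(y)=f_n(x)$ fails—both because a mistake ball leaves $g(n,\e)$ coordinates unconstrained and because a greedy maximal set does not localize the supremum at its center—and the pointwise comparison of $\sum_i\tfrac1l\varphi^k_l(T^ix)$ with $f_n(x)$ runs against the direction supplied by sub-additivity in (\ref{ds23}). The correct resolution is not pointwise but through averaging: since $E$ is in particular an ordinary $(n,\e)$-separated set and $\tfrac1l\varphi^k_l$ is a \emph{single} continuous function, Lemma \ref{yl21} lets one bound $P^*(g;T,\mathcal{F},n,\e)$ by the ordinary additive pressure of $\tfrac1l\varphi^k_l$ up to the $o(n)$ errors $C(g(n,\e)+1)/n$, $\tfrac1k$ and $\eta$; invoking the classical variational principle for this continuous potential and then sending $l\to\infty$, $k\to\infty$ identifies the limit with $\sup\{h_\mu(T)+\mathcal{F}_*(\mu)\}$, which by Theorem \ref{dl11} is the quantity to be compared with $P(g;T,\mathcal{F})$. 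The left-hand inequality is by contrast routine once the index arithmetic $\#(\Lambda_x\cap\Lambda_{x'})\ge n-2g(n,\e)$ and the inclusion $I(2g;n,\e)\subseteq I(2g;n,2\e)$ are in place, the latter following from the monotonicity of the mistake function and the harmlessness of the radius doubling after $\e\to0$.
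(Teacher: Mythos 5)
Your treatment of the left-hand inequality $P(2g;T,\mathcal{F})\le P^*(g;T,\mathcal{F})$ is correct and is essentially the paper's own argument: the paper likewise injects a $(2g;n,\epsilon)$-separated set $E$ into a $(g;n,\epsilon/2)$-spanning set $F$ by choosing $\phi(x)\in F$ and $\Lambda_x\in I(g;n,\epsilon/2)$ with $d_{\Lambda_x}(x,\phi(x))\le\epsilon/2$, and then intersecting index sets, which is exactly your computation $\#(\Lambda_x\cap\Lambda_{x'})\ge n-2g(n,\cdot)$. One shared wrinkle: both you and the paper need an inclusion of index families (your $I(2g;n,\epsilon)\subseteq I(2g;n,2\epsilon)$, the paper's $I(2g;n,\epsilon/2)\subseteq I(2g;n,\epsilon)$), i.e.\ monotonicity of $g$ in the $\epsilon$-variable; the definition of a mistake function only provides monotonicity in $n$, so your appeal to ``the monotonicity of the mistake function'' is not literally available --- but since the paper makes the same tacit assumption, I do not count this against you.

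The genuine gap is in the right-hand inequality $P^*(g;T,\mathcal{F})\le P(g;T,\mathcal{F})$. The paper proves it with the greedy construction you start from, and its whole point is that greediness forces the localization $\sup_{y\in B_n(g;x,\epsilon)}f_n(y)=f_n(x)$ on the greedy set, which yields the finite-scale bound $P^*(g;T,\mathcal{F},n,\epsilon)\le\sum_{x\in E}e^{f_n(x)}\le P(g;T,\mathcal{F},n,\epsilon)$. Your objection to that localization (a later center's mistake ball can meet an earlier ball on which $f_n$ is larger) has substance, but what you substitute for it does not prove the statement, for two concrete reasons. (i) The limit interchange: for each fixed $l$ the classical variational principle gives $\sup_\mu\{h_\mu(T)+\int\frac1l\varphi_l^k\,d\mu\}$; to ``identify the limit with $\sup_\mu\{h_\mu(T)+\mathcal{F}_*(\mu)\}$'' you must move $l\to\infty$ (an infimum over $l$, by sub-additivity and Fekete's lemma) inside the supremum over $\mu$. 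Since $\inf_l\sup_\mu\ge\sup_\mu\inf_l$ and the entropy map of a general TDS is not upper semicontinuous --- and this paper insists on no extra hypotheses on $X$, $T$, or $\mathcal{F}$ --- this interchange is unjustified; avoiding precisely this interchange is why the sub-additive variational principle (Theorem \ref{dl11}) is proved in \cite{cfh,fh} by a direct Misiurewicz-type measure construction rather than by reduction to the additive case. (ii) The direction: even granting (i), your chain ends with $P^*(g;T,\mathcal{F})\le\sup_\mu\{h_\mu(T)+\mathcal{F}_*(\mu)\}=P(T,\mathcal{F})$. But $P(g;T,\mathcal{F})\le P(T,\mathcal{F})$ holds trivially (every $(g;n,\epsilon)$-separated set is $(n,\epsilon)$-separated), so you have bounded $P^*(g;T,\mathcal{F})$ by a quantity lying \emph{above} $P(g;T,\mathcal{F})$, not below it. To conclude $P^*(g;T,\mathcal{F})\le P(g;T,\mathcal{F})$ you would need $P(T,\mathcal{F})\le P(g;T,\mathcal{F})$, which is exactly the nontrivial half of Theorem \ref{thm.B}, and the paper's proof of Theorem \ref{thm.B} rests on the present proposition --- so your route is circular, or at best proves a different statement that does not imply the one asked. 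Any valid proof must compare $\sum_{x\in E}\exp[\sup_{y\in B_n(g;x,\epsilon)}f_n(y)]$ against the partition sums defining $P(g;T,\mathcal{F})$ itself at finite scale, which is what the paper's greedy localization, delicate as it is, is designed to do.
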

\begin{proof}
Let $F$ be an $(g;n,\epsilon/2)-$spanning subset of $X$, and let
$E$ be an $(2g;n,\epsilon)-$separated subset of $X$. Define a map
$\phi:E\rightarrow F$ by choosing for each $x\in E$ some
$\phi(x)\in F$ and some $\Lambda_x\in I(g;n,\e/2)$ such that
$d_{\Lambda_x}(x,\phi(x))\leq \e/2$. Suppose that $x,y\in E$ with
$x\neq y$, let $\Lambda=\Lambda_x\cap\Lambda_y$. Since $\Lambda\in
I(2g;n,\e/2)$,  $d_\Lambda(\phi(x),\phi(y))>0$ and thus
$\phi(x)\neq\phi(y)$. Hence, $\phi$ is injective. Therefore,
\[
P^*(g;T,\mathcal{F},n,\e/2)\geq \sup\{\sum_{y\in E}e^{f_n(y)}: E
\mbox{ is an }
   (2g;n,\epsilon)-\mbox{separated subset of } X\} .
\]
This immediately shows that $P^*(g;T,\mathcal{F})\geq
P(2g;T,\mathcal{F})$.

Next, we prove that $P^*(g;T,\mathcal{F})\leq P(g;T,\mathcal{F})$.
Given $n\in \mathbb{N}$ and $\e>0$, choose $x_1\in X$  with
$f_n(x_1)=\sup_{x\in X}f_n(x)$, and then choose $x_2\in X\setminus
B_n(g;x_1,\e)$ with $f_n(x_2)=\sup_{x\in X\setminus
B_n(g;x_1,\e)}f_n(x)$. We continue this process. More precise, in
step $m$  choose $x_m\in X\setminus
\bigcup_{j=1}^{m-1}B_n(g;x_j,\e)$ with $f_n(x_m)=\sup_{x\in
X\setminus \bigcup_{j=1}^{m-1}B_n(g;x_j,\e)}f_n(x)$. This process
 stops at some step $l$, producing  a maximal
$(g;n,\e)-$separated set $E=\{ x_1,x_2,\cdots, x_l\}$(meaning that
$E$ is also an $(g;n,\e)-$spanning set of $X$, see lemma 3.3 in
\cite{th} for a proof). Therefore,
\begin{eqnarray*}
P^*(g;T,\mathcal{F},n,\e)&\leq&\sum_{x\in E}\exp [\sup_{y\in
B_n(g;x,\e)}f_n(y)]=\sum_{x\in E}
e^{f_n(x)}\\
&\leq& \sup\{\sum_{y\in E}f_n(y): E \mbox{ is an }
   (g;n,\epsilon)-\mbox{separated subset of } X\}
\end{eqnarray*}
This  immediately implies  that $P^*(g;T,\mathcal{F})\leq
P(g;T,\mathcal{F})$, and completes the proof of the lemma.
\end{proof}

\section{Proof of main results}
This section proves theorems A and B presented  in the former
section.

\subsection{ Proof of Theorem A}This subsection gives the proof of theorem A
by following the arguments in \cite{pes2} and \cite{th}, but the
proof here is more complicated. This means  that the
asymptotically sub-additive measure-theoretic  pressure is stable
under a mistake function.
\begin{proof}Assume that $\mu\in
\mathcal{E}(X,T)$ with $\mathcal{F}_*(\mu)\neq -\infty$. Note that
$B_n(x,\e)\subset B_n(g;x,\e)$ implies that an
$(n,\e,\d)-$spanning set must be a $(g;n,\e,\d)-$spanning set, and
this leads to the following inequality
\begin{eqnarray*}
&&P_\mu(g;T,\mathcal{F},n,\e,\delta)\leq\inf \left\{\sum_{x\in
S}\exp[\sup_{y\in B_n(g;x,\e)}f_n(y)]\mid S\ \hbox{is a}\
(n,\e,\delta)- \hbox{spanning set}\right\}\\
&&\leq e^{[C( g(n,\e)+1)+n(\frac 1 k+\eta)]}\inf \left\{\sum_{x\in
S}\exp\sum_{i=0}^{n-1}\frac 1 l \varphi_{l}^{k}(T^ix)\mid S\
\hbox{is a}\ (n,\e,\delta)- \hbox{spanning set}\right\}
\end{eqnarray*}
where the second inequality follows from lemma \ref{yl21}. The
terms $l,C,\eta,k$ and $\Phi_k=\{\varphi_{n}^{k}\}_{n\geq 1}$ are
all the same as lemma \ref{yl21}. Previous authors \cite{hlz}
proved that
\begin{eqnarray*}
&&\lim_{\e\rightarrow 0}\liminf_{n\rightarrow\infty}\frac 1 n\log\
 \inf \left\{\sum_{x\in S}\exp\sum_{i=0}^{n-1}\frac 1 l
\varphi_{l}^{k}(T^ix)\mid S\ \hbox{is a}\ (n,\e,\delta)-
\hbox{spanning set}\right\}\\
&&=h_{\mu}(T)+\int \frac 1 l \varphi_{l}^{k}(x)\mathrm{d}\mu.
\end{eqnarray*}
Therefore, based on the fact that $g$ is a mistake function,
\begin{eqnarray*}
P_\mu(g;T,\mathcal{F})\leq h_{\mu}(T)+\int \frac 1 l
\varphi_{l}^{k}(x)\mathrm{d}\mu+\frac{1}{k}+\eta.
\end{eqnarray*}
Let $l\rightarrow\infty$ and $k\rightarrow\infty$, and the
arbitrariness of $\eta$ implies that $P_\mu(g;T,\mathcal{F})\leq
h_{\mu}(T)+\mathcal{F}_*(\mu)$.

 Now, we turn to prove the reverse inequality that $P_\mu(g;T,\mathcal{F})\geq
 h_{\mu}(T)+\mathcal{F}_*(\mu)$.  This method is similar to the proof of theorem A2.1 in \cite{pes2}. For each $\eta >0$, there exists
 $0<\gamma\leq \eta$, a finite partition $\xi=\{C_1,C_2,\cdots ,
 C_m\}$  and a finite open cover $\mathcal{U}=\{ U_1,U_2,\cdots ,
 U_k\}$ of $X$, where $k\geq m$. The following properties
 holds(using the regularity of the measure $\mu$):

 (1)$|U_i|\leq \eta$ and $|C_j|\leq\eta$, $1\leq i\leq k, 1\leq j\leq
 m$, here $|\cdot|$ denote the diameter of  set;

 (2)$\overline{U_i}\subset C_i$, $1\leq i\leq
 m$, where $\overline{A}$ denotes the closure of the set $A$;

 (3)$\mu (C_i\setminus U_i)\leq \gamma$, $1\leq i\leq
 m$ and $\mu(\bigcup_{i=m+1}^{k}U_i)\leq \g$;

 (4) $2\g \log m\leq \eta$.

 Next,  fix $\eta$  so $1-\d>\eta>0$ and take the corresponding $\g$,
 partition $\xi$ and covering $\mathcal{U}$. Fix $Z\subset X$ with
 $\mu(Z)>1-\d$ and put $t_n(x):=\sharp \{ 0\leq l<n: T^lx\in
 \bigcup_{i=m+1}^{k}U_i\}$. Let
 $\xi_n=\bigvee_{i=0}^{n-1}T^{-i}\xi$ and $\xi_n(x)$ denote the
 element of $\xi_n$ contains $x$.

 We claim that: there exists $A\subset
 Z$ and $N>0$ with $\mu(A)\geq \mu(Z)-\g$ such that for every $x\in
 A$ and $n\geq N$, we have (i) $t_n(x)\leq 2\g n$; (ii) $\mu(\xi_n(x))\leq \exp[-(h_{\mu}(T,\xi)-\g)n]
 $; (iii) $\mathcal{F}_*(\mu)-\g\leq\frac{1}{n}f_n(x)\leq
 \mathcal{F}_*(\mu)+\g$.

 {\it Proof of the claim:} Let $g=\chi_{\bigcup_{i=m+1}^{k}U_i}$,
 then $t_n(x)=\sum_{j=0}^{n-1}g(T^ix)$. According to the Birkhoff ergodic
 theorem and  Egorov theorem, we can find a set $A_1\subset Z$
 with $\mu(A_1)\geq \mu(Z)-\frac \g 3$ such that
 \[
 \lim_{n\rightarrow\infty}\frac 1 n t_n(x)=\lim_{n\rightarrow\infty}\frac 1
 n \sum_{j=0}^{n-1}g(T^ix) = \int g \mathrm{d}\mu
 =\mu(\bigcup_{i=m+1}^{k}U_i)\leq \g
 \]
 holds uniformly on $A_1$. Therefore, we can choose $N_1$ such that if
 $n\geq N_1$ and $x\in A_1$, then $t_n(x)\leq 2\g n$. Using the
 Shannon -Mcmillan-Brieman  thereom and Egorov theorem, it is possible to  find a set $A_2\subset Z$
 with $\mu(A_2)\geq \mu(Z)-\frac \g 3$. By the same argument, it is possible to choose $N_2$ such that if
 $n\geq N_2$ and $x\in A_2$, then $\mu(\xi_n(x))\leq \exp[-(h_{\mu}(T,\xi)-\g)n]
 $. Then,  using Egorov theorem and the  fact that
 \[
 \lim_{n\rightarrow}\frac 1 n f_n(x)=\mathcal{F}_*(\mu)(\neq -\infty),\ \
 \mu-a.e.\ x\in X.
 \]we can find a set $A_3\subset Z$
 with $\mu(A_3)\geq \mu(Z)-\frac \g 3$. By the same argument, it is possible to  choose $N_3$ such that if
 $n\geq N_3$ and $x\in A_3$, then  $\mathcal{F}_*(\mu)-\g\leq\frac{1}{n}f_n(x)\leq
 \mathcal{F}_*(\mu)+\g$. Set $A=A_1\cap A_2\cap A_3$ and
 $n=\max\{N_1,N_2,N_3\}$ to prove the claim.

 Set $\xi_{n}^{*}:=\{ \xi_n(x)\in \xi_n \mid  \xi_n(x)\cap A\neq \emptyset
 \}$. Using (ii) of the claim shows that
 \begin{eqnarray} \label{ds34}
 \sharp \xi_{n}^{*} \geq \sum_{\xi_n(x)\in \xi_{n}^{*}}\mu
 (\xi_n(x))\exp[(h_{\mu}(T,\xi)-\g)n]\geq \mu(A)
 \exp[(h_{\mu}(T,\xi)-\g)n],\ \forall n\geq N
 \end{eqnarray}
Let $2\e$ be the Lebesgue number of the open cover $\mathcal{U}$
and let $S$ be a $(g;n,\e)-$spanning set for $Z$. Picking a
suitable $\Lambda_x\in I(g;n,\e)$ leads to $Z\subset \bigcup_{x\in
S}\overline{B}_{\Lambda_x}(x,\e)$. Let $S'\subset S$ such that
$\overline{B}_{\Lambda_x}(x,\e)\cap A\neq \emptyset$ for each
$x\in S'$. Fix $x\in S'$ and $B=\overline{B}_{\Lambda_x}(x,\e)$,
let $\xi_{\Lambda_x}:=\bigvee_{j\in \Lambda_x}T^{-j}\xi$,
$p(B,\xi_{\Lambda_x}):=\sharp \{C\in  \xi_{\Lambda_x}\mid C\cap
A\cap B\neq\emptyset \}$ and $p(B,\xi_n):=\sharp \{C\in \xi_n\mid
C\cap A\cap B\neq\emptyset \}$.

We now estimate the number $p(B,\xi_{\Lambda_x})$. Note that
$\overline{B}(T^jx,\e)\subset U_{i_l}$ for some $U_{i_l}\in
\mathcal{U}$, since $2\e$ is the Lebesgue number of the open cover
$\mathcal{U}$. If $i_l\in \{ 1,2,\cdots , m\}$ then
$T^{-l}U_{i_l}\subset T^{-l}C_{i_l}$. If $i_l\in \{ m+1,\cdots ,
k\}$, then there are at most $m$ sets of the form $T^{-l}C_{i_l}$
may have non-empty intersection with $T^{-l}U_{i_l}$. Using (i) of
the claim shows that
\[
p(B,\xi_{\Lambda_x})\leq m^{2\g n}=\exp(2\g n\log m).
\]
Therefore,
\[
p(B,\xi_n)\leq p(B,\xi_{\Lambda_x}) m^{g(n,\e)}\leq \exp [(2\g
n+g(n,\e))\log m].
\]
It follows that
\begin{eqnarray} \label{ds35}
\sharp \xi_{n}^{*}\leq \sum_{x\in
S'}p(\overline{B}_{\Lambda_x}(x,\e),\xi_n)\leq \sharp S'  \exp
[(2\g n+g(n,\e))\log m].
\end{eqnarray}
Therefore,
\begin{eqnarray*}
\sum_{x\in S}\exp[\sup_{y\in B_n(g;x,\e)}f_n(y)]&\geq& \sum_{x\in
S'}\exp[\sup_{y\in B_n(g;x,\e)}f_n(y)] \geq\sharp S'
\exp[n(\mathcal{F}_*(\mu)-\g)]\\
&\geq&\mu(A)\exp
[(h_{\mu}(T,\xi)+\mathcal{F}_*(\mu)-2\g)n-(g(n,\e)+2n\g )\log m]
\end{eqnarray*}
where the second inequality follows from the fact that
$B_n(g;x,\e)\cap A\neq \emptyset$ for each $x\in S'$ and (iii) of
the claim, and the third inequality follows from (\ref{ds34}) and
(\ref{ds35}). This leads to
\[
\frac 1 n \log P_{\mu}(g;T,\mathcal{F},n,\e,\d)\geq  \frac 1 n
\log \mu(A)
+h_{\mu}(T,\xi)+\mathcal{F}_*(\mu)-2\g-\frac{(g(n,\e)+2n\g)\log
m}{n}
\]
Since $\g<\eta$, $2\g \log m<\eta$, $\frac{g(n,\e)}{n}\rightarrow
0$ as $n\rightarrow\infty$,  $|\xi|:=\max_{1\leq i\leq
m}{|C_i|}<\eta$, and $\eta$ is arbitrary,
\[
P_{\mu}(g;T,\mathcal{F})\geq h_{\mu}(T)+\mathcal{F}_*(\mu).
\]
This completes the proof of the theorem.\end{proof}

\subsection{\bf Proof of Theorem B}This subsection combines the results in theorem A and proposition
\ref{mt22} to give the proof of theorem B. This proof says that
the asymptotically sub-additive  topological pressure is stable
under a mistake function.
\begin{proof} If $E$ is a
$(g;n,\e)-$separated set, then $E$ must be an $(n,\e)-$separated
set. Therefore,
\[
P(g;T,\mathcal{F},n,\e)\leq \sup\left\{\sum_{y\in E}f_n(y): E
\mbox{ is an }
   (n,\epsilon)-\mbox{separated subset of } X\right\}.
\]
Hence, $P(g;T,\mathcal{F})\leq P(T,\mathcal{F})$.

Now it is enough to prove that $P(g;T,\mathcal{F})\geq \sup
\{h_{\mu}(T)+\mathcal{F}_*(\mu):\mu\in \mathcal{E}(X,T),
\mathcal{F}_*(\mu)\neq -\infty\} $ by remark \ref{zhu1}. To
illustrate this statement, for each $\mu\in \mathcal{E}(X,T)$ with
$\mathcal{F}_*(\mu)\neq -\infty$, a $(g;n,\e)-$spanning set must a
$(g;n,\e,\d)-$spanning set. Therefore,
\[
P^*(g;T,\mathcal{F},n,\e)\geq P_{\mu}(g;T,\mathcal{F},n,\e,\d).
\]
According  to theorem A and proposition \ref{mt22},
\[
P(g;T,\mathcal{F})\geq P^*(g;T,\mathcal{F})\geq
h_{\mu}(T)+\mathcal{F}_*(\mu),\ \forall \mu\in \mathcal{E}(X,T)\
\hbox{with}\  \mathcal{F}_*(\mu)\neq -\infty.
\]
Combining the above arguments, theorem B immediately
follows.\end{proof}

 \noindent {\bf Acknowledgements.} Part of this work was carried out when Cheng and Zhao visited NCTS,  authors
sincerely appreciates the warm hospitality of the host. Cheng is
partially supported by NSC Grants 99-2115-M-034-001, Zhao is
partially supported by NSFC(11001191), NSF in Jiangsu
province(09KJB110007) and a Pre-research Project of Suzhou
University, Cao is partially supported by NSFC(10971151)  and the
973 Project (2007CB814800).


\begin{thebibliography}{999}
\bibitem{ba}
L.M. Barreira, {\it A non-additive thermodynamic formalism and
applications to dimension theory of hyperbolic dynamical systems},
Ergod. Th. Dynam. Syst. 16, (1996), 871-927.

\bibitem{ba2}
L.M. Barreira, {\it  Nonadditive thermodynamic formalism:
equilibrium and Gibbs measures }, Discrete Contin. Dyn. Syst., 16
(2006), 279-305.



\bibitem{bo}
R. Bowen,  {\it Equilibrium states and the ergodic theory of
anosov diffeomorphisms}, Lecture notes in Math., No 470,
Springer-Verlag, (1975).

\bibitem{bowen}
R. Bowen, {\it Hausdorff dimension of quasicircles}, Inst. Haustes
$\acute{E}$tudes Sci. Publ. Math., 50 (1979), 11-25.

\bibitem{cfh}Yong-Luo Cao, De-Jun Feng and Wen Huang, {\it The thermodynamic formalism for sub-additive potentials},
     Discrete and Continuous Dynamical Systems, 20 (2008), 259-273.






\bibitem{fal} K. Falconer, {\it A sub-additive thermodynamic
formalism for mixing repellers }, J. Phys. A 21, no. 14 (1988),
L737-L742.

\bibitem{fh}
De-Jun Feng  and Wen Huang,  {\it Lyapunov spectrum of
asymptotically sub-additive potentials },  Commun. Math. Phys.
297, (2010), 1-43.

\bibitem{hlz}
Lian-Fa He, Jin-Feng Lv and Li-Na Zhou, {\it Definition of
measure-theoretic pressure using spanning sets}, Acta Math.
Sinica, Engl. Ser. 20(4), (2004), 709-718.

\bibitem{katok}
A. Katok, {\it Lyapunov exponents, entropy and periodic points for
diffeomorphisms}, Publ. IHES 51, (1980), 137-173.


\bibitem{k6}Anatole Katok and Boris Hasselblatt, {\it An introduction to the modern theory of dynamical systems. Encyclopedia
of Mathematics and Its Applications, volume 54,} Cambridge
University Press, Cambridge,  (1995).





\bibitem{mu}
A. Mummert, {\it The thermodynamic formalism for almost-additive
sequences}, Discrete Contin. Dyn. Syst., 16 (2006), 435-454.



\bibitem{pp}Ya. Pesin and B.  Pitskel', {\it Topological pressure and
the variational principle for noncompact sets},  Functional Anal.
Appl. Vol., 18 (1984), 307-318.

\bibitem{pes}Ya. Pesin, {\it Dimension type characteristics for
invariant sets of dynamical systems}, Russian Math. Surveys 43,
no. 4, (1988), 111-151.

\bibitem{pes2}Ya. Pesin, {\it Dimension theory in dynamical systems, Contemporary Views
and  Applications }, University of  Chicago  Press, Chicago, (1997).

\bibitem{rue}
D. Ruelle, {\it Statistical mechanics on a compact set with
$Z^{\upsilon}$ action satisfying expansiveness and specification
}, Trans. Amer. Math. Soc., 187(1973), 237-251.

\bibitem{rue2}
D. Ruelle, {\it Thermodynamic formalism. The mathematical
structures of classical equilibrium statistical mechanics },
Encyclopedia of Mathematics and its Applications, 5.
Addison-Wesley Publishing Co., Reading Mass., (1978).

\bibitem{th}
D. Thompson, Irregular sets, the $\beta-$transformation and the
almost specification property, Preprint 2009.



\bibitem{k10}P. Walters, {\it An Introduction to ergodic theory}, Springer Lecture Notes, Vol.458, (1982).

\bibitem{wa1}
P. Walters, {\it A variational principle for the pressure of
continuous transformations}, Amer. J. Math., 97 (1975), 937-971.

\bibitem{zhang}
 Guo-Hua  Zhang, {\it Variational principles of pressure}, Discrete Contin. Dyn. Syst. 24(4), (2009), 1409-1435.

\bibitem{zc}
Yun Zhao and Yong-Luo Cao, {\it Measure-theoretic pressure for
subadditive potentials}, Nonlinear Analysis 70, (2009), 2237-2247.

\bibitem{zzc}
Yun Zhao, Li-Bo Zhang and Yong-Luo Cao, {\it The asymptotically
additive topological pressure on the irregular set for
asymptotically additive potentials }, Preprint, (2009).

\end{thebibliography}
\end{document}